\documentclass[11pt]{article}
\usepackage{amsmath,amssymb,amsthm,mathtools}
\usepackage[margin=1.1in]{geometry}
\usepackage{hyperref}
\usepackage{mathrsfs}

\newtheorem{theorem}{Theorem}
\newtheorem{thmX}{Theorem}

\newtheorem{proposition}{Proposition}
\newtheorem{lemma}[proposition]{Lemma}

\newtheorem{example}[proposition]{Example}
\theoremstyle{definition}
\newtheorem{definition}[proposition]{Definition}
\theoremstyle{remark}
\newtheorem{remark}[proposition]{Remark}

\usepackage{bbm}
\newcommand{\BF}{\bf\boldmath }

\newcommand{\R}{\mathbb{R}}
\newcommand{\bT}{\mathbb{T}}
\newcommand{\bR}{\mathbb{R}}
\newcommand{\bS}{\mathbb{S}}
\newcommand{\HH}{\mathbb{H}}

\newcommand{\Free}{\mathrm{Free}^\infty}
\newcommand{\Imm}{\operatorname{Im}}

\newcommand{\Ree}{\operatorname{Re}}

\newcommand{\spn}{\operatorname{span}}
\def\cD{{\cal D}}

\author{Roberto De Leo}
\title{Free maps in critical dimension on $\bT^m$}
\date{\today}

\begin{document}

\maketitle

\begin{abstract}
The critical dimension for free maps on m-manifolds is $q_m=m(m+3)/2$.
We show that $\Free(\bT^m,\R^{q_m})\neq\emptyset$ for every $m\ge 1$.
The main tool is a product construction: given free maps in critical dimension on 
$\bT^a$ and on $\bT^b$, together with a cross-free map on $\bT^a\times \bT^b$, whose mixed Hessian is invertible everywhere, we obtain a free map in critical dimension on $\bT^{a+b}$, the osculating matrix of the product being block-triangular. 
Cross-free maps are additive in each argument, and in the torus setting none exists when one factor is one-dimensional. 
We construct explicit cross-free maps on $\bT^2\times \bT^2$ and on $\bT^3\times\bT^4$ using quaternionic multiplication. 
Combined with the low-dimensional cases  $m\leq 5$, constructed explicitly by the author in a previous publication, these yield the result by induction.
%The proof combines a block-triangular product decomposition of the osculating matrix with Gromov's h-principle for directed immersions, applied to two open subsets of Grassmannians defined through quaternionic multiplication, whose ampleness we verify. 
%The low-dimensional cases $m\le 5$ explicitly constructed by the author in a previous publication serve as the base of an induction.
\end{abstract}
\maketitle

\section{Introduction}

Throughout, $\bT^m=\R^m/2\pi\mathbb{Z}^m$ with its flat coordinates, and a $C^2$ map
$f\colon \bT^m\to\R^q$ is \emph{free} if the $q_m$ vectors $\partial_\alpha f$ $(1\le \alpha\le m)$ and $\partial_\alpha\partial_\beta f$ $(1\le \alpha\le \beta\le m)$
are linearly independent at every point. For $q=q_m$ this means that the
$q_m\times q_m$ osculating matrix $\cD f$ has nonzero determinant everywhere.

%Within their work on elimination of singularities, 
It was shown by Yakov Eliashberg and Misha Gromov~\cite{EG71} (see also~\cite[Section 1.1.4]{Gro86} and~\cite[Theorem 8.3.3]{CEM24}) that the partial differential relation of being a free map \(M\to \R^q\) satisfies the \(h\)-principle if either \(q>q_m\) or if \(q=q_m\) and \(M\) is open (i.e. has no compact connected component). 
In particular this means that, under those hypotheses, a sufficient condition for free maps \(M\to \R^q\) to arise is that \(M\) be parallelizable.
On the contrary, no general criterion is known about the existence of free maps on closed manifolds in critical dimension besides projective spaces and spheres (e.g. see~\cite[Section~1.2, case 5]{Gro17} and~\cite[Intrigue, case F]{CEM24}).
%(because of the Veronese map, e.g. see~\cite{Gro86}).

In~\cite{DL26} we extended the list of closed manifolds admitting free maps in critical dimension to closed surfaces and low-dimensional tori.
In particular, the main result of this work is based on the following fact, proven by the author by exhibiting concrete maps obtained through an elementary ansatz.

\begin{thmX}[De Leo, 2026~\cite{DL26}]\label{thm:base}
$\Free(\bT^m,\R^{q_m})\neq\emptyset$ for $m=2,3,4,5$.
\end{thmX}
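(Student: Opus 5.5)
Theorem~\ref{thm:base} is quoted from~\cite{DL26}, where it is proved by explicit construction. The plan here is to reproduce that kind of argument: write down concrete trigonometric maps $f\colon\bT^m\to\R^{q_m}$ for $m=2,\dots,5$ and check that $\det\cD f$ never vanishes.

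\textbf{The ansatz.} I would build each component from simple Fourier modes. For every coordinate $x_\alpha$ I take the pair $(\cos x_\alpha,\sin x_\alpha)$. For every pair $\alpha<\beta$ I take modes such as $\cos(x_\alpha\pm x_\beta)$ and $\sin(x_\alpha\pm x_\beta)$, possibly with small coefficients or extra phases.

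\textbf{Count and structure.} For a single variable the curve $t\mapsto(\cos t,\sin t)$ gives first and second derivatives $(-\sin t,\cos t)$ and $(-\cos t,-\sin t)$, which are independent everywhere. This already settles $m=1$, with $q_1=2$. For $m$ variables, the $2m$ components $(\cos x_\alpha,\sin x_\alpha)$ account for the $m$ first derivatives and the $m$ pure second derivatives $\partial_\alpha^2$. The remaining $\binom m2$ mixed derivatives $\partial_\alpha\partial_\beta$ need $\binom m2$ further components, for a total of $2m+\binom m2=q_m$.

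The natural choice is one function $g_{\alpha\beta}$ per pair with nonvanishing mixed derivative, arranged so that $\cD f$ is block-triangular up to perturbation. A single trigonometric function cannot have $\partial_\alpha\partial_\beta g$ nonvanishing on the whole torus, since it has mean zero. So the pair components must be coupled, for example $g_{\alpha\beta}=\sin x_\alpha\sin x_\beta+\epsilon\,(\dots)$ combined with reparametrized pure blocks. The aim is that the zero sets of the individual mixed terms are compensated by contributions from the other rows.

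\textbf{Verification.} With the ansatz fixed, I would compute $\det\cD f$ as a trigonometric polynomial. Using the rotation symmetries of the $(\cos,\sin)$ blocks, I expect it to reduce to a polynomial in a few variables, and then I would show it has constant sign. For this I would write it as a sum of squares plus a positive constant, or bound it below directly; interval arithmetic on a fine grid together with Lipschitz bounds is the fallback.

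\textbf{Main obstacle.} The hard step is choosing the coupling so that the determinant never vanishes. Topological and mean-value obstructions rule out the naive block-diagonal choice. I would first search small parameter families by computer for a candidate whose determinant stays bounded away from zero numerically, then certify that candidate rigorously for each $m\le5$ separately, since the determinant's size grows quickly with $m$.
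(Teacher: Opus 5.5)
Your proposal is a search strategy, not a proof. Theorem~\ref{thm:base} is a pure existence statement, so its entire content is the exhibition of specific maps on $\bT^2,\dots,\bT^5$ together with a certified check that $\det\cD f$ never vanishes. You write down no candidate for any $m$, not even for $m=2$, where a check by hand is feasible. The step you call the ``main obstacle'' is the theorem itself, and nothing in your ansatz guarantees that the family you would search contains a free map. As written, the argument only covers $m=1$, which is not part of the statement. The paper does not reprove the result either: it quotes~\cite{DL26}, where concrete maps are exhibited and checked.

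Note that $m=4$ needs no separate search. Lemma~\ref{lem:product} with $a=b=2$, $f=g=F_2$ and $h=h_{2,2}$ produces it from $m=2$. By contrast, $m=2,3,5$ cannot be reached by the product method: $P_{a,1}=\emptyset$ by Proposition~\ref{prop: P a,1}, and $P_{2,3}$ is not known to be nonempty. That is exactly why explicit constructions are needed there.

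Beyond being incomplete, the perturbative coupling you sketch cannot work in some cases. Keep the blocks $(\cos x_\alpha,\sin x_\alpha)$ and take $g_{\alpha\beta}=\sin x_\alpha\sin x_\beta$. Then $\cD f_0$ is exactly block-triangular with pure $2\times 2$ blocks of determinant $1$, and its mixed block is diagonal with entries $\cos x_\alpha\cos x_\beta$. Hence $\det\cD f_0=\pm\prod_\alpha\cos^{m-1}x_\alpha$.

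\begin{itemize}
\item For $m=2$ and $m=4$ this function takes both signs on the connected torus. So every map $C^2$-close to $f_0$ still has $\det\cD f=0$ somewhere; this covers any small $\epsilon$ and any small reparametrization of the pure blocks. The coupling must be of order one, not a correction. For $m=2$ this is your own zero-mean observation about $\partial_1\partial_2 g$, now propagated to all small perturbations.
\item For $m=3,5$ the unperturbed determinant is $\ge 0$, but it vanishes on the hypertori $\{\cos x_\alpha=0\}$. There $\cD f_0$ drops rank by at least $m-1\ge 2$, so the first-order variation of the determinant vanishes identically. Whether a perturbation lifts these zeros is a higher-order question your plan does not address.
\end{itemize}

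The missing ingredient is therefore not a routine certification step. It is the actual design of maps for $m=2,3,5$ that overcome these sign and degeneracy obstructions.
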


The case $m=1$ is trivial because $\bS^1$ is a sphere and, as recalled above, free maps in critical dimension are known to exist on all spheres. 
The main result of this article is that free maps in critical dimension arise on $\bT^m$ for all integers $m\geq1$ (Theorem~\ref{thm:main}).
We prove this by induction using the free map $F_2\in \Free(\bT^2,\bR^5)$ in~\cite{DL26}
%(which exist by Theorem~\ref{thm:base}) 
as the base of the induction argument.
Also $F_3$ and $F_5$ in~\cite{DL26} are necessary to the proof, while $F_4$ is never used.
%using the existence result above as its starting point.

\section{Free maps on tori}

%In order to use induction, we need a way to 
\paragraph{Combining free maps.}
The first step towards the proof of Theorem~\ref{thm:main} is introducing a method to generate a free smooth map on $\bT^{a+b}$ out of a free smooth map on $\bT^a$ and a free smooth map on $\bT^b$.
The key ingredient we use to this goal is the following type of function.
\begin{definition}
    Given an $a$-dimensional manifold $M$ with coordinates $(x^i)$ and a $b$-dimensional manifold $N$ with coordinates $(y^\alpha)$, we say that 
    $$
%    h:\bT^a\times\bT^b\to\bR^{ab}
    h:M\times N\to\bR^{ab}
    $$
    is {\bf cross-free} if, for every $(x,y)\in M\times N$, its {\bf mixed Hessian}
    \[
%\text{\BF$H_h$}(x,y)\colon \R^a\otimes\R^b\longrightarrow\R^{ab},\quad e_i\otimes e_\alpha\longmapsto \frac{\partial^2 h}{\partial x_i\,\partial y_\alpha}(x,y),
\text{\BF$H_h$}(x,y)\colon T_xM\otimes  T_yN\longrightarrow\R^{ab},\quad
\frac{\partial}{\partial x^i}\otimes\frac{\partial}{\partial y^\alpha} \longmapsto \frac{\partial^2 h}{\partial x^i\,\partial y^\alpha}(x,y),
\]
    is invertible.
    %at every point of $\bT^a\times \bT^b$.
    We denote by
    $$
    \text{\BF$P^\infty(M,N)$}\subset C^\infty(M\times N,\bR^{ab})
    $$ 
    the set of all cross-free functions on $M\times N$.
    %$\bT^a\times\bT^b$.
\end{definition}
Notice that, unlike freeness, cross-freeness is {\em tensorial} and so independent of the coordinate system.
\begin{example}
    Let $h:\mathbb R^a\times \mathbb R^b\to \mathbb R^{ab}$ be the function $h(x,y)=x\otimes y.$
    In coordinates, 
    $$
    h(x,y)=\bigl(x_i y_\alpha\bigr)_{1\le i\le a,\ 1\le \alpha\le b}. 
    $$
    Then $ \frac{\partial^2 h}{\partial x_i\,\partial y_\alpha}$ is exactly the standard basis vector \(E_{i\alpha}\in\mathbb R^{ab}\) and so
    $$ H_h:\mathbb R^a\otimes\mathbb R^b\to\mathbb R^{ab}, \quad e_i\otimes e_\alpha\mapsto E_{i\alpha}, $$
    is the identity after the natural identification $\mathbb R^a\otimes\mathbb R^b\simeq \mathbb R^{ab}$.
    Hence, $h\in P^\infty(\bR^a,\bR^b)$.

    %So \(h(x,y)=x\otimes y\) is the canonical cross-free map.
    For instance, when \(a=b=2\),
$$ h(x_1,x_2,y_1,y_2) = (x_1y_1,x_1y_2,x_2y_1,x_2y_2), $$
and the four mixed derivatives are $
(1,0,0,0),(0,1,0,0),(0,0,1,0),(0,0,0,1).$
\end{example}

\begin{proposition}
    \label{prop: P a,1}
    Assume that $M$ is a closed manifold and let $N$ be either $\bR$ or $\bS^1$.
    Then $P^\infty(M,N)=\emptyset$.
\end{proposition}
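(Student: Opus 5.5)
With $b=1$, a cross-free map $h\colon M\times N\to\R^a$ is one whose mixed Hessian $H_h(x,y)$ is invertible at every point. Writing $y$ for the coordinate on $N$, this means the map
$$
g_y := \partial_y h(\cdot,y)\colon M\to\R^a
$$
has invertible differential $d_xg_y = \bigl(\partial^2 h/\partial x^i\partial y\bigr)_i$ at every $x$. The plan is to fix one value $y_0\in N$ and derive a contradiction from $g_{y_0}$ alone.

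First, $g_{y_0}$ is well defined and smooth on $M$. This holds for $N=\R$, and also for $N=\bS^1$, because $\partial_y$ is a global vector field on $\bS^1$, so $\partial_y h$ is a globally defined smooth function on $M\times N$.

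Second, invertibility of $H_h(x,y_0)$ for every $x\in M$ says exactly that $g_{y_0}\colon M\to\R^a$ is a local diffeomorphism. Here $\dim M=a$ and the target is $\R^a$.

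Third, $M$ is closed and nonempty (when $M=\emptyset$ the statement is degenerate, and I would note this convention), so $g_{y_0}$ is a contradiction in itself. A local diffeomorphism is an open map, so $g_{y_0}(M)$ is open in $\R^a$. Since $M$ is compact, $g_{y_0}(M)$ is also compact, hence closed. It is nonempty, so by connectedness of $\R^a$ it equals $\R^a$. But $\R^a$ is not compact when $a\ge1$, a contradiction.

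An alternative is to take any linear functional $\ell$ and look at a maximum point of $\ell\circ g_{y_0}$ on compact $M$. There the differential $\ell\circ d g_{y_0}$ vanishes, which contradicts surjectivity of $dg_{y_0}$.

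The case $a=0$ should be excluded or handled by convention. There $\R^{ab}=\R^0$, and invertibility is vacuous.

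The only point needing care is the well-definedness of $\partial_y h$ on $\bS^1$, which the global vector field settles. Beyond that the argument is elementary. It uses nothing specific about $N$ except that it is $1$-dimensional with a nonvanishing vector field.
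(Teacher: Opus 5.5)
Your proof is correct and is essentially the paper's argument: fix $y_0$, note that $x\mapsto \partial_y h(x,y_0)$ is a local diffeomorphism $M\to\R^a$ because its Jacobian is the mixed Hessian, and conclude by compactness of $M$. You add the explicit open-and-closed image argument and flag the degenerate cases ($a=0$, $M=\emptyset$), which the paper leaves implicit.
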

\begin{proof}
    Since $b=1$, we have that $P^\infty(M,N)\subset C^\infty(M\times N,\bR^{a})$.
    Then, for any fixed $y\in N$, the Jacobian of the map $M\to\bR^a$ defined by $x\mapsto \partial_y h(x,y)$ is $J=\partial^2_{x^iy} h$, which is invertible at every point when $h\in P^\infty(M,N)$.
    Such a map is a local diffeomorphism, which is impossible when $M$ is closed.
\end{proof}
In this article we are interested in the spaces $P^\infty(\bT^a,\bT^b)$, for which we use throughout the article the notation {\BF $P_{a,b}$}.
Notice that there is a natural identification of $P_{a,b}$ with $P_{b,a}$, so they are empty or non-empty at the same time.

% We start introducing a way to combine two free maps into a free map in higher dimension.

% The first main ingredient of 

% \begin{theorem}\label{thm:main}
% $\Free(\bT^m,\R^{q_m})\neq\emptyset$ for every $m\ge1$.
% \end{theorem}

% %The following cases are known explicitly and are used as input.

% %For $m=1$ this is the circle $x\mapsto(\cos x,\sin x)$; for $m=2$ the verification is by hand; for $m=3,5$ it relies on exact Sturm-sequence computations.
% %We shall not need De Leo's case $m=4$, which also follows from the argument below.

% \section{The product reduction}

% \begin{definition}
% For integers $a,b\ge1$ we say that $P_{a,b}$ holds if there is a smooth map
% $h\colon \bT^a\times \bT^b\to\R^{ab}$, $(x,y)\mapsto h(x,y)$, whose \emph{mixed Hessian}
% \[
% H_h(x,y)\colon \R^a\otimes\R^b\longrightarrow\R^{ab},\quad
% e_i\otimes e_\alpha\longmapsto \frac{\partial^2 h}{\partial x_i\,\partial y_\alpha}(x,y),
% \]
% is invertible at every point of $\bT^a\times \bT^b$.
% \end{definition}

% Clearly $P_{a,b}\Leftrightarrow P_{b,a}$.

\begin{lemma}[Product lemma]\label{lem:product}
    Assume that $a,b$ are such that neither of the sets $$
    P_{a,b},\quad\Free(\bT^a,\R^{q_a}),\quad\Free(\bT^b,\R^{q_b})
    $$ 
    is empty. 
    Let $f\in\Free(\bT^a,\R^{q_a})$, $g\in\Free(\bT^b,\R^{q_b})$ and $h\in P_{a,b}$ and set
\[
F:\bT^{a+b}\to\bR^{q_{a+b}},\quad F(x,y)=\bigl(f(x),\,g(y),\,h(x,y)\bigr).
%\in \R^{q_a}\oplus\R^{q_b}\oplus\R^{ab}=\R^{q_{a+b}}
\]
Then $F\in\Free(\bT^{a+b},\R^{q_{a+b}})$.
\end{lemma}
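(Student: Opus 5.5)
The plan is to write down the osculating matrix of $F$ at a point $(x,y)\in\bT^{a+b}$ and show that it is block-triangular with invertible diagonal blocks. First I check the dimension count. Since
\[
q_{a+b}=\tfrac{(a+b)(a+b+3)}{2}=\tfrac{a(a+3)}{2}+\tfrac{b(b+3)}{2}+ab=q_a+q_b+ab,
\]
the target splits as $\R^{q_a}\oplus\R^{q_b}\oplus\R^{ab}$, matching the three components of $F$. So $F$ takes values in $\R^{q_{a+b}}$ and $\cD F$ is square.

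Next I group the $q_{a+b}$ derivatives of $F$ into three families and write each in the three components. The first family is the pure-$x$ derivatives $\partial_i F,\ \partial_i\partial_j F$ (with $i\le j$), of which there are $q_a$. The second family is the pure-$y$ derivatives $\partial_\alpha F,\ \partial_\alpha\partial_\beta F$ (with $\alpha\le\beta$), of which there are $q_b$. The third family is the mixed derivatives $\partial_i\partial_\alpha F$, of which there are $ab$.

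Because $f$ depends only on $x$ and $g$ only on $y$, these vectors have the following form:
\[
\partial_i F=(\partial_i f,0,\partial_i h),\quad \partial_i\partial_jF=(\partial_i\partial_j f,0,\partial_i\partial_j h),
\]
\[
\partial_\alpha F=(0,\partial_\alpha g,\partial_\alpha h),\quad \partial_\alpha\partial_\beta F=(0,\partial_\alpha\partial_\beta g,\partial_\alpha\partial_\beta h),
\]
\[
\partial_i\partial_\alpha F=(0,0,\partial_i\partial_\alpha h).
\]

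Take the columns of $\cD F$ in the order of the three families, and the rows in the order of the components $(f,g,h)$. Then $\cD F$ is block lower-triangular:
\[
\cD F(x,y)=\begin{pmatrix}\cD f(x)&0&0\\ 0&\cD g(y)&0\\ \ast&\ast&H_h(x,y)\end{pmatrix}.
\]
Here $H_h(x,y)$ is the mixed Hessian, written as a matrix in the bases $\{\partial_{x^i}\otimes\partial_{y^\alpha}\}$ and the standard basis of $\R^{ab}$.

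It follows that
\[
\det\cD F(x,y)=\pm\det\cD f(x)\,\det\cD g(y)\,\det H_h(x,y),
\]
where the sign comes from reordering rows and columns. Each factor is nonzero everywhere: the first two because $f$ and $g$ are free, and the third because $h\in P_{a,b}$. Hence the $q_{a+b}$ derivative vectors are linearly independent at every point, so $F$ is free.

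Smoothness and periodicity of $F$ are inherited from $f$, $g$ and $h$. There is no real obstacle here. The only points to be careful about are the bookkeeping that makes the dimension count match, and checking that the zero blocks really vanish, which uses that $f$ and $g$ each depend on only one factor.
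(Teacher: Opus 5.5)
Your proposal is correct and follows the paper's proof: you check $q_a+q_b+ab=q_{a+b}$ and then order the columns as pure-$x$, pure-$y$ and mixed derivatives. This makes $\cD F$ block lower-triangular with diagonal blocks $\cD f$, $\cD g$, $H_h$, so $\det\cD F=\pm\det\cD f\,\det\cD g\,\det H_h\neq0$.
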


\begin{proof}
First we verify the dimension count:
$$
q_a + q_b + ab = \tfrac12\bigl[a(a+3) + b(b+3) + 2ab\bigr] = \tfrac12\bigl[(a+b)(a+b+3)\bigr] =q_{a+b}.
$$
% $$
% $q_{a+b}-q_a-q_b=\tfrac12\bigl[(a+b)(a+b+3)-a(a+3)-b(b+3)\bigr]=ab$, so $F$ lands in
% the critical dimension. 
Now, order the columns of $\cD F$ as follows: derivatives of order one and two in $x$
only; derivatives of order one and two in $y$ only; the $ab$ mixed derivatives
$\partial_{x_i}\partial_{y_\alpha}$. Order the rows according to the three target summands.
Since $f$ depends only on $x$, the $f$-rows vanish on the second and third groups of columns;
likewise the $g$-rows vanish on the first and third groups. Hence
\[
\cD F=\begin{pmatrix} \cD f & 0 & 0\\ 0 & \cD g & 0\\ * & * & H_h\end{pmatrix},
\quad \det \cD F=\pm\det \cD f\cdot\det \cD g\cdot\det H_h\neq 0.\qedhere
\]
\end{proof}

\begin{lemma}[Additivity]\label{lem:additive}
    If $P_{a,b_1}\neq\emptyset$ and $P_{a,b_2}\neq\emptyset$, then $P_{a,b_1+b_2}\neq\emptyset$. 
    By symmetry, $P_{a_1,b}\neq\emptyset$ and $P_{a_2,b}\neq\emptyset$ implies $P_{a_1+a_2,b}\neq\emptyset$.
\end{lemma}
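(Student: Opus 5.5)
Given $h_1\in P_{a,b_1}$ and $h_2\in P_{a,b_2}$, the plan is to glue them along the common $x$-factor. Write points of $\bT^{b_1+b_2}=\bT^{b_1}\times\bT^{b_2}$ as $y=(y_1,y_2)$ and set
\[
h\colon \bT^a\times\bT^{b_1+b_2}\to\R^{ab_1}\oplus\R^{ab_2}=\R^{a(b_1+b_2)},\qquad h(x,y_1,y_2)=\bigl(h_1(x,y_1),\,h_2(x,y_2)\bigr).
\]
This is smooth and periodic in all variables, since each component is. The claim is that $h\in P_{a,b_1+b_2}$.

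To check cross-freeness, compute the mixed Hessian $H_h(x,y)\colon \R^a\otimes\R^{b_1+b_2}\to\R^{a(b_1+b_2)}$. Split the domain as $\R^a\otimes\R^{b_1}\oplus\R^a\otimes\R^{b_2}$, so that the columns are indexed first by pairs $(i,\alpha)$ with $y^\alpha$ a coordinate of $y_1$, and then by pairs $(i,\beta)$ with $y^\beta$ a coordinate of $y_2$. Split the target by the two components of $h$.
\begin{itemize}
\item The first component $h_1$ does not depend on $y_2$, so $\partial_{x^i}\partial_{y^\beta}h_1=0$.
\item Likewise, the second component $h_2$ does not depend on $y_1$, so $\partial_{x^i}\partial_{y^\alpha}h_2=0$.
\end{itemize}
Hence
\[
H_h(x,y_1,y_2)=\begin{pmatrix} H_{h_1}(x,y_1) & 0\\ 0 & H_{h_2}(x,y_2)\end{pmatrix},
\]
and $\det H_h=\det H_{h_1}\cdot\det H_{h_2}\neq0$ at every point. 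So $h$ is cross-free, and $P_{a,b_1+b_2}\neq\emptyset$.

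The second statement follows from the identification $P_{a,b}\cong P_{b,a}$ noted earlier, obtained by swapping the factors and reordering the target coordinates; alternatively, it follows from the same construction with the roles of $x$ and $y$ exchanged. There is no real obstacle here. The only point needing care is the bookkeeping identification $\R^a\otimes(\R^{b_1}\oplus\R^{b_2})\cong(\R^a\otimes\R^{b_1})\oplus(\R^a\otimes\R^{b_2})$, which is what makes the Hessian block diagonal.
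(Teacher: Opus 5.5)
Your proposal is correct and matches the paper's proof. Both take $h(x,y',y'')=\bigl(h_1(x,y'),h_2(x,y'')\bigr)$, observe that the mixed Hessian is block diagonal $\operatorname{diag}(H_{h_1},H_{h_2})$, and obtain the second claim from the symmetry $P_{a,b}\cong P_{b,a}$.
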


\begin{proof}
    Write $y=(y',y'')\in \bT^{b_1}\times \bT^{b_2}$, let $h_1\in P_{a,b_1}$ and $h_2\in P_{a,b_2}$ and set 
    $$
    h(x,y)=(h_1(x,y'),h_2(x,y''))\in\R^{ab_1}\oplus\R^{ab_2}.
    $$ 
    Since $h_1$ does not depend on $y''$ and $h_2$ does not depend on $y'$, after ordering the columns as $(x,y')$-mixed then $(x,y'')$-mixed, $H_h=\operatorname{diag}(H_{h_1},H_{h_2})$.
\end{proof}

\begin{remark}\label{rem:obstruction}
$P_{a,1}=\emptyset$ for every $a$ by Proposition~\ref{prop: P a,1}. 
% Indeed, if $h\colon \bT^a\times \bS^1\to\R^a$ realized it, then for fixed $y$ the map $x\mapsto\partial_y h(x,y)$, $\bT^a\to\R^a$, would have invertible Jacobian $H_h(\cdot,y)$ everywhere, i.e.\ it would be a local diffeomorphism from a compact manifold to $\R^a$, which is impossible (its image would be open and compact). 
% In particular, the case $m=3$ cannot be obtained from the lower-dimensional cases by Lemma~\ref{lem:product}. 
%For $m=5$, this rules out the decomposition $5=1+4$, although a proof of $P_{2,3}$ would yield the case $m=5$ from the cases $m=2$ and $m=3$.
This is why the base case $m=3$ cannot be obtained from Lemma~\ref{lem:product}.
The case $m=5$ could be obtained if $P_{2,3}$ turned out to be non-empty. 
We do not investigate this matter here.
\end{remark}

We prove in the next section by an explicit construction that $P_{2,2}$ and $P_{3,4}$ are not empty.
Below we show that these facts are enough to prove our main theorem.

\begin{theorem}\label{thm:main}
    $\Free(\bT^m,\R^{q_m})\neq\emptyset$ for every $m\ge1$.
\end{theorem}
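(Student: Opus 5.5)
We argue by strong induction on $m$, taking as given that $P_{2,2}\neq\emptyset$ and $P_{3,4}\neq\emptyset$ (proved in the next section). The base cases are $m=1$ (the circle, a sphere), $m=2,3,5$ from Theorem~\ref{thm:base}, and we may also include $m=4$ from Theorem~\ref{thm:base}, although it will also follow from the product construction.

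First we use Lemma~\ref{lem:additive} to identify which $P_{a,b}$ are nonempty. From $P_{2,2}\neq\emptyset$, additivity in both arguments gives $P_{2i,2j}\neq\emptyset$ for all $i,j\ge1$. From $P_{3,4}\neq\emptyset$ and $P_{2,4}\neq\emptyset$ we get $P_{3+2k,4}\neq\emptyset$ for all $k\ge0$. Hence $P_{a,4}\neq\emptyset$ for every $a\ge2$: even $a$ comes from the $P_{2i,2j}$ family, and odd $a\ge3$ from the second family. By the symmetry $P_{a,b}\cong P_{b,a}$, we also have $P_{4,a}\neq\emptyset$ for all $a\ge2$.

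For the inductive step, let $m\ge6$ and assume $\Free(\bT^k,\R^{q_k})\neq\emptyset$ for all $k<m$. Write $m=(m-4)+4$. Then $m-4\ge2$, so $P_{m-4,4}\neq\emptyset$ by the previous paragraph. Moreover, $\Free(\bT^{m-4},\R^{q_{m-4}})$ and $\Free(\bT^4,\R^{q_4})$ are nonempty by the induction hypothesis, or by Theorem~\ref{thm:base} when $m-4\le5$. Lemma~\ref{lem:product} then gives $\Free(\bT^m,\R^{q_m})\neq\emptyset$. The case $m=4$ follows the same way from $4=2+2$ using $P_{2,2}$ and $F_2$, which matches the remark that $F_4$ is never needed. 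Note that $m=5$ is not covered by this argument, since $5=4+1$ would require $P_{4,1}$, which is empty by Proposition~\ref{prop: P a,1}, and $5=2+3$ would require $P_{2,3}$, which is unknown. Likewise $m=3$ is not covered. So these two cases must come from Theorem~\ref{thm:base}.

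The only delicate point is making sure the decomposition always avoids a summand equal to $1$ and uses only $P_{a,b}$ that are known to be nonempty. Taking $4$ as a fixed summand handles every $m\ge6$, because $m-4\ge2$ and all $P_{a,4}$ with $a\ge2$ are nonempty. The real work therefore lies in the explicit constructions of $P_{2,2}$ and $P_{3,4}$, which this argument takes as given from the next section.
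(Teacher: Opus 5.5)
Your proof is correct and uses the same ingredients as the paper: Lemma~\ref{lem:product}, Lemma~\ref{lem:additive} with the symmetry $P_{a,b}\cong P_{b,a}$, the nonemptiness of $P_{2,2}$ and $P_{3,4}$, and the base cases $m=1,2,3,5$. The only difference is bookkeeping: you split every $m\ge6$ uniformly as $(m-4)+4$ using $P_{a,4}\neq\emptyset$ for all $a\ge2$, whereas the paper splits even $m$ as $2+(m-2)$ and odd $m\ge7$ as $4+(m-4)$.
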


\begin{proof}
    Notice first that, by Lemma~\ref{lem:additive}, since $P_{2,2}\neq\emptyset$ then $P_{2,2\ell}\neq\emptyset$ for all $\ell\ge1$.
    Since $\Free(\bT^2,\bR^5)\neq\emptyset$, then we get inductively through Lemma~\ref{lem:product} applied to the case $a=2,b=2\ell$ that $\Free(\bT^{m},\bR^{q_{m}})\neq\emptyset$ for every even $m\geq2$.
    
    Now, by symmetry, $P_{4,2}\neq\emptyset$.
    Moreover, since  $P_{3,4}\neq\emptyset$, then by symmetry $P_{4,3}\neq\emptyset$.
    Hence, by Lemma~\ref{lem:additive}, $P_{4,k}\neq\emptyset$ for $k\geq2$.
    Hence, for $m=2\ell+1\geq7$, namely $\ell\geq3$, we get inductively through Lemma~\ref{lem:product} applied to the case $a=4,b=m-4$ that $\Free(\bT^{m},\bR^{q_{m}})\neq\emptyset$ for every odd $m\geq7$. 

    By Theorem~\ref{thm:base}, $\Free(\bT^3,\bR^9)\neq\emptyset$ and $\Free(\bT^5,\bR^{20})\neq\emptyset$, so now all cases $m\geq2$ are covered.
    Finally, the case $m=1$ holds because $\bS^1$ is a sphere.
%     and symmetry, if $P_{2,2}\neq\emptyset$ then $P_{2,2\ell}\neq\emptyset$ for all $\ell\ge1$ and
% $P_{4,2}$; $P_{3,4}$ gives $P_{4,3}$; hence $P_{4,n}$ for every $n\ge2$ (write $n=2\ell$ or
% $n=3+2\ell$). We argue by induction on $m$, the cases $m\le 5$ being Theorem~\ref{thm:base}.
% If $m\ge6$ is even, apply Lemma~\ref{lem:product} with $(a,b)=(2,m-2)$. If $m\ge7$ is odd,
% apply it with $(a,b)=(4,m-4)$, noting $m-4\ge3$.
\end{proof}

% =====================================================================
%  Section 3, rewritten in quaternionic language.
%
%  Matrices survive only where they carry genuine data: the explicit
%  curves r_lambda, G_lambda, S(z), G(z).  Everything else --- L, R, K,
%  L_{e_r}, A, B, Theta, sigma, the immersion --- is written in H.
%
%  Uses the existing macros \R \bT \HH \Imm \Ree \spn.
%  Replaces the whole of Section 3.
% =====================================================================

\section{$P_{2,2}$ and $P_{3,4}$ are not empty}

\subsection*{Quaternionic preliminaries}

Throughout this section we identify $\R^4$ with the quaternions $\HH$ via
\[
  (a_0,a_1,a_2,a_3)\longmapsto a_0+a_1i+a_2j+a_3k ,
\]
and $\R^3$ with the imaginary quaternions $\Imm\HH$. We write
$e_0,e_1,e_2,e_3$ for the standard basis, so that $e_0=1$ and
$(e_1,e_2,e_3)=(i,j,k)$, and we use on $\HH$ the Euclidean product
$\langle a,b\rangle=\Ree(a\bar b)$ together with the orientation of
$(1,i,j,k)$. For imaginary $a,b$ one has
\begin{equation}\label{eq:improd}
  ab=-\langle a,b\rangle+a\times b .
\end{equation}

For $a\in\HH$ let $L_a$ and $R_a$ denote left and right multiplication,
\[
  L_aq=aq,\qquad R_aq=qa .
\]
% and let $\mu:\HH\times\HH\to\HH$ be the multiplication operator, namely
% $$
% \mu(a,b)=ab=L_ab=R_ba.
% $$
The associativity of $\HH$ says precisely that
\begin{equation}\label{eq:assoc}
  L_{ab}=L_aL_b,\qquad R_{ab}=R_bR_a,\qquad L_aR_b=R_bL_a
\end{equation}
for all $a,b\in\HH$: left multiplications compose covariantly, right ones contravariantly, and a left multiplication always commutes with a right one. 
The multiplicativity of the norm gives
\begin{equation}\label{eq:norm}
  |aq|=|qa|=|a|\,|q| ,
\end{equation}
so $L_a$ and $R_a$ are $|a|$ times an orthogonal map and so $\det L_a=\det R_a=\pm|a|^4$.
The function $a\mapsto\det L_a/|a|^4$ is
continuous on the connected set $\HH\setminus\{0\}$, takes values in $\{\pm1\}$ and equals $1$ at $a=1$ since $L_1=\mathrm{id}$. 
Hence,
\begin{equation}\label{eq:det}
  \det L_a=\det R_a=|a|^4
\end{equation}
and, in particular, $L_a,R_a\in SO_4$ when $|a|=1$.

% ; in particular, under the identification of $\HH$ with $\bR^4$, $L_a,R_a$ are represented by $4\times4$ linear matrices.
%Since $\frac{1}{|a|}|L_aq|=|q|$, then $\frac{1}{|a|}L_a\in SO_4$ and similarly for $R_a$.
% Then
% \begin{equation}\label{eq:det}
%   \det L_a=\det R_a=\pm|a|^4 
% \end{equation}
% and, when $|a|=1$, since $L_1=R_1=id$, we have that $L_a,R_a\in SO_4$.
% Indeed, by~\eqref{eq:norm}, these determinants equal $\pm|a|^4$, and the sign
% is constant on the connected set $\HH\setminus\{0\}$, hence equal to its
% value $+1$ at $a=1$.

% We abbreviate
% \[
%   L_{e_r}=L_{e_r},\; r=1,2,3,
%   \quad
%   K=R_i,
% \]
% so that $L_{e_r}K=KL_{e_r}$ by the third identity in~\eqref{eq:assoc}. 
%Note that $K\neq L_{e_1}$: the two agree on $\spn\{1,i\}$ and are opposite on $\spn\{j,k\}$, which is exactly where $\HH$ fails to commute.
Finally, we set
\[
  A_x=L_{e^{xi}},\qquad B_y=R_{e^{yi}}.
\]
These are two commuting circle actions on $\HH$ by elements of $SO_4$ with
\begin{equation}\label{eq:AB}
  A'_x=A_xL_i,\qquad B'_y=B_yR_i,
\end{equation}
and 
%such that the relation
% \[
%   \mu\bigl(A_xa,\,B_yb\bigr)=A_xB_y\,\mu(a,b).
% \]
\[
  (A_xa)(B_yb)=A_x(B_y(ab)).
\]
%is nothing but $(e^{xi}a)(b\,e^{yi})=e^{xi}(ab)e^{yi}$.

\subsection*{The map $\Theta$ and its section}

\begin{definition}
Let
\[
  \Theta:\HH\longrightarrow\Imm\HH,\qquad \Theta(a)=a\,i\,\bar a .
\]
\end{definition}

\begin{lemma}\label{lem:theta}
The map $\Theta$ has the following properties.
\begin{enumerate}
\item $|\Theta(a)|=|a|^2$ and
      $\Theta(B_ya)=\Theta(a)$ for every $y\in\R$; thus,
      $\Theta$ is the Hopf map up to scale.
\item For every smooth curve $v=v(z)$ in $\HH$,
\[
  \tfrac12\tfrac{d}{dz}\Theta(v)=\Imm\bigl(v\,i\,\overline{v'}\bigr),
\]
  equivalently
\[
  \bigl\langle\tfrac12\tfrac{d}{dz}\Theta(v),\,e_r\bigr\rangle
  =\langle L_{e_r}v',R_iv\rangle,\quad r=1,2,3 .
\]
\item {\rm(Section.)} For $\xi\in\Imm\HH$ put $\rho=|\xi|$ and
      $\xi_1=\langle \xi,i\rangle$. 
      On the chart $\rho+\xi_1>0$ set
\[
  \sigma(\xi)=\frac{\rho-\xi i}{\sqrt{2(\rho+\xi_1)}} .
\]
    Then $\sigma$ is smooth and
\[
  \Theta(\sigma(\xi))=\xi,\qquad |\sigma(\xi)|^2=|\xi| .
\]
\end{enumerate}
\end{lemma}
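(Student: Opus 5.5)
All three items reduce to direct quaternionic algebra, using only the conjugation rules $\overline{pq}=\bar q\,\bar p$, $\bar i=-i$, the multiplicativity of the norm \eqref{eq:norm}, and \eqref{eq:improd}. I would verify them in order.

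\emph{Item 1.} First check that $\Theta(a)$ is imaginary: $\overline{a i\bar a}=a\,\bar i\,\bar a=-a i\bar a$. By \eqref{eq:norm}, $|\Theta(a)|=|a|\,|i|\,|\bar a|=|a|^2$. For the invariance, $B_ya=ae^{yi}$, so
\[
\Theta(B_ya)=a\,e^{yi}\,i\,e^{-yi}\,\bar a=a i\bar a ,
\]
because $e^{yi}$ commutes with $i$. Hence $\Theta$ restricted to $\bS^3$ maps onto $\bS^2\subset\Imm\HH$ and is constant on the orbits $a e^{yi}$ of the right circle action, which is the Hopf fibration. I will only state this identification, not reprove it.

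\emph{Item 2.} Differentiate with the product rule:
\[
\tfrac{d}{dz}\Theta(v)=v'\,i\,\bar v+v\,i\,\overline{v'} .
\]
Since $\overline{v\,i\,\overline{v'}}=v'\,\bar i\,\bar v=-v' i\bar v$, the two terms are $w$ and $-\bar w$ with $w=v i\overline{v'}$. Their sum is $w-\bar w=2\Imm w$, which gives the first formula. For the component form, note that $e_r$ is imaginary, so
\[
\tfrac12\langle \tfrac{d}{dz}\Theta(v),e_r\rangle=\langle w,e_r\rangle=\Ree(w\,\bar e_r)=\Ree\bigl(v i\overline{v'}\,\bar e_r\bigr)=\Ree\bigl(vi\,\overline{e_rv'}\bigr)=\langle vi,e_rv'\rangle .
\]
This equals $\langle L_{e_r}v',R_iv\rangle$ by symmetry of the inner product.

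\emph{Item 3.} Smoothness on the chart is immediate: $\rho=|\xi|$ is smooth for $\xi\neq0$, the chart excludes $\xi=0$ since there $\rho+\xi_1=0$, and the denominator is positive. For the norm, $\xi i$ is the quaternion $-\langle\xi,i\rangle+\xi\times i$ by \eqref{eq:improd}, so $q:=\rho-\xi i=(\rho+\xi_1)-\xi\times i$. Since $|\xi\times i|^2=\rho^2-\xi_1^2$,
\[
|q|^2=(\rho+\xi_1)^2+\rho^2-\xi_1^2=2\rho(\rho+\xi_1),
\]
and therefore $|\sigma(\xi)|^2=\rho$.

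\emph{Main step: $\Theta(\sigma(\xi))=\xi$.} Since $\bar q=\rho+i\xi$ (using $\overline{\xi i}=\bar i\,\bar\xi=i\xi$), I need to compute
\[
q\,i\,\bar q=(\rho-\xi i)\,i\,(\rho+i\xi)=(\rho i+\xi)(\rho+i\xi),
\]
where I used $i^2=-1$. Expanding gives
\[
\rho^2 i-\rho\xi+\rho\xi+\xi i\xi=\rho^2 i+\xi i\xi .
\]
The key identity is then $\xi i\xi=2\xi_1\xi+\rho^2 i$. To get it, I write $\xi i=-i\xi-2\xi_1$, which follows from $\xi i+i\xi=-2\langle \xi,i\rangle$, itself a consequence of \eqref{eq:improd}. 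Then
\[
\xi i\xi=(-i\xi-2\xi_1)\xi=\rho^2 i-2\xi_1\xi .
\]
The sign needs care here, so I redo it: $i\xi^2=-\rho^2 i$, hence $-i\xi\xi=\rho^2 i$, and so $\xi i\xi=\rho^2 i-2\xi_1\xi$. This would give $q i\bar q=2\rho^2 i-2\xi_1\xi$, which is not proportional to $\xi$. That signals a sign slip somewhere in the expansion, so I will recompute carefully, tracking $\bar q$ and the order of the factors. The target is
\[
q\,i\,\bar q=2\rho(\rho+\xi_1)\,\xi ,
\]
so that dividing by $2(\rho+\xi_1)$ leaves $\rho\,\xi$. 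This is $|\xi|\,\xi$ rather than $\xi$, so either the normalization of $\sigma$ is different, or I have the formula reversed (for instance $\Theta(\sigma(\xi))$ proportional to $\xi$ with $|\sigma|^2=|\xi|$ only up to this scaling).

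The consistency check is $|\Theta(\sigma)|=|\sigma|^2=\rho=|\xi|$, which is compatible with $\Theta(\sigma(\xi))=\xi$. So the scaling is fine, and the discrepancy must come from an algebra slip in the expansion. This sign and ordering bookkeeping, using $\xi i+i\xi=-2\xi_1$, is the main obstacle and the step I would recheck first. A fallback that avoids the direct expansion is to argue by equivariance. First verify the claim at $\xi=\rho i$, where $\sigma=\sqrt{\rho}$ and $\Theta(\sqrt\rho)=\rho i$. Then use $\Theta(ua)=u\Theta(a)\bar u$ for unit $u$: the rotation taking $i$ to $\xi/\rho$ is conjugation by $u=(1-\hat\xi i)/|1-\hat\xi i|$, where $\hat\xi=\xi/\rho$. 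This $u$ is exactly proportional to $q$, and that proves the identity.
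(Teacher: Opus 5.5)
\textbf{Verdict: genuine gap in item 3.} Items 1 and 2, and the smoothness and norm parts of item 3, are correct and essentially follow the paper; your norm computation via $q=(\rho+\xi_1)-\xi\times i$ is a harmless variant. The gap is the central identity $\Theta(\sigma(\xi))=\xi$, which you leave unresolved.

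\textbf{Where the slip is.} The error is in the conjugate. You correctly have $\overline{\xi i}=i\xi$, but then $\bar q=\rho-\overline{\xi i}=\rho-i\xi$, not $\rho+i\xi$: you dropped the minus sign. With the correct conjugate,
\[
q\,i\,\bar q=(\rho i+\xi)(\rho-i\xi)=\rho^2 i+2\rho\,\xi-\xi i\xi .
\]
Your identity $\xi i\xi=\rho^2 i-2\xi_1\xi$ is correct; deriving it from $\xi i+i\xi=-2\xi_1$ is a fine substitute for the paper's reflection formula. It gives $q\,i\,\bar q=2(\rho+\xi_1)\xi$. Since $\Theta(ca)=c^2\Theta(a)$ for real $c$, dividing by $2(\rho+\xi_1)$ yields exactly $\xi$. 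This is the paper's computation.

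Your stated ``target'' $2\rho(\rho+\xi_1)\xi$ was also wrong. Your own consistency check $|q\,i\,\bar q|=|q|^2=2\rho(\rho+\xi_1)$ already forces the factor $2(\rho+\xi_1)$, so the worry about obtaining $|\xi|\,\xi$ was spurious.

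\textbf{The equivariance fallback does not close the gap as written.} Its skeleton is fine:
\begin{itemize}
\item $\sigma(\xi)=\sqrt\rho\,u$, by your norm computation;
\item $\Theta(ua)=u\,\Theta(a)\,\bar u$ for unit $u$;
\item $\Theta(\sqrt\rho)=\rho i$.
\end{itemize}
However, the claim that conjugation by $u\propto 1-\hat\xi i$ carries $i$ to $\hat\xi$ is exactly the case $\rho=1$ of the identity you are trying to prove. Checking it by hand is the same computation in which you slipped.

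\textbf{How to repair it.} To make the fallback independent, justify that claim separately. Let $\theta<\pi$ be the angle between $i$ and $\hat\xi$, and $n$ the unit vector along $i\times\hat\xi$ (or $u=1$ if $\hat\xi=i$). Then
\[
1-\hat\xi i=(1+\hat\xi_1)+i\times\hat\xi=2\cos\tfrac\theta2\,\bigl(\cos\tfrac\theta2+\sin\tfrac\theta2\,n\bigr).
\]
Now invoke the half-angle formula for rotations by unit quaternions. Either fix is short, but as submitted the main step of item 3 is not established.
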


\begin{proof}
First of all we verify that $\Theta(\HH)\subset\Imm\HH$.
This holds because $\overline{\Theta(a)}=\overline{a\,i\,\bar a}=a\,\bar\imath\,\bar a=-\Theta(a)$.

{\bf 1.} $|\Theta(a)|=|a|\,|i|\,|\bar a|=|a|^2$
by~\eqref{eq:norm} and
$\Theta(B_y a)=ae^{y i}\,i\,\overline{ae^{y i}}=a\,e^{y i}ie^{-y i}\,\bar a=a\,i\,\bar a=\Theta(a)$.

\medskip
{\bf 2.} Since $\overline{v\,\bar\imath\,\bar v'}=-v'i\bar v$, we have that
$$
\tfrac{d}{dz}\Theta(v)=v'i\bar v+v\,i\,\overline{v'}=2\Imm(v\,i\,\overline{v'}).
$$

For the second form, using $\langle x,y\rangle=\Ree(x\bar y)$ and
$\overline{vi}=-i\bar v$, we have that
\[
  \langle L_{e_r}v',R_iv\rangle=\Ree\bigl(e_rv'\,\overline{vi}\bigr)
  =-\Ree\bigl(e_rv'i\bar v\bigr)
  =-\langle e_r,\overline{v'\,i\,\overline{v}}\rangle
  =\langle e_r,\Imm(v\,i\,\overline{v'})\rangle=\tfrac{1}{2}\tfrac{d}{dz}\Theta(v).
\]

{\bf 3.} We first point out that, for a unit $u\in\Imm\HH$ and any $v\in\Imm\HH$,
\begin{equation}\label{eq:refl}
  u\,v\,u=v-2\langle u,v\rangle u .
\end{equation}
Indeed, by~\eqref{eq:improd}, 
$$
uvu=\bigl(-\langle u,v\rangle+u\times v\bigr)u
=-\langle u,v\rangle u+(u\times v)u,
$$
and, again by~\eqref{eq:improd},
$$
(u\times v)u=-\langle u\times v,u\rangle+(u\times v)\times u
=v\langle u,u\rangle-u\langle v,u\rangle=v-\langle u,v\rangle u.
$$

Now set $p=\rho-\xi i$, so that $\sigma(\xi)=p/\sqrt{2(\rho+\xi_1)}$. Since
$\Ree(\xi i)=-\langle \xi,i\rangle=-\xi_1$ and $|\xi i|=\rho$,
\[
  |p|^2=\rho^2-2\rho\,\Ree(\xi i)+|\xi i|^2=2\rho(\rho+\xi_1).
\]
Moreover $\bar p=\rho-i\xi$ and $pi=(\rho-\xi i)i=\rho i+\xi$, whence
\[
  \Theta(p)=(\rho i+\xi)(\rho-i\xi)=\rho^2i+2\rho \xi-\xi i\xi .
\]
Writing $\xi=\rho u$ with $u$ unit imaginary and applying~\eqref{eq:refl},
\[
  \xi i\xi=\rho^2\,u\,i\,u=\rho^2\bigl(i-2\langle u,i\rangle u\bigr)=\rho^2i-2\xi_1\xi ,
\]
so that $\Theta(p)=2(\rho+\xi_1)\xi$. Dividing by $|{\cdot}|$ as prescribed,
\[
  \Theta(\sigma(\xi))=\frac{2(\rho+\xi_1)\xi}{2(\rho+\xi_1)}=\xi,
  \qquad
  |\sigma(\xi)|^2=\frac{2\rho(\rho+\xi_1)}{2(\rho+\xi_1)}=\rho .
\]
Finally $\rho+\xi_1=0$ exactly when $\xi$ is a negative multiple of $i$, which
is the single fibre the chart omits.
\end{proof}

\begin{lemma}\label{lem:det}
For all $u_1,u_2,w_1,w_2\in\HH$,
\[
  \det\bigl(u_1w_1,\;u_1w_2,\;u_2w_1,\;u_2w_2\bigr)
  =-\bigl|\Imm(\bar u_1u_2)\times\Imm(w_2\bar w_1)\bigr|^2 .
\]
\end{lemma}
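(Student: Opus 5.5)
The identity will be proved by normalising $u_1$ and $w_1$ to $1$ through the multiplication operators $L_a,R_b$, whose determinants are known by~\eqref{eq:det}. After that the statement becomes a $3\times3$ triple-product computation in $\Imm\HH$.

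\textbf{Degenerate cases.} If $u_1=0$, two columns of the left-hand side vanish, and $\Imm(\bar u_1u_2)=0$, so both sides are $0$. The case $w_1=0$ is identical.

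\textbf{Equivariance of both sides.} Assume $u_1,w_1\neq0$.
\begin{itemize}
\item For $c\in\HH$, replacing $u_k$ by $cu_k$ for $k=1,2$ applies $L_c$ to all four columns. By~\eqref{eq:det} this multiplies the left-hand side by $|c|^4$. On the right, $\Imm(\overline{cu_1}\,cu_2)=\Imm(\bar u_1\bar c c u_2)=|c|^2\Imm(\bar u_1u_2)$, so the right-hand side is also multiplied by $|c|^4$.
\item Symmetrically, replacing $w_l$ by $w_ld$ applies $R_d$, using associativity~\eqref{eq:assoc}. This multiplies both sides by $|d|^4$, since $\Imm(w_2d\,\overline{w_1d})=|d|^2\Imm(w_2\bar w_1)$.
\end{itemize}

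\textbf{Reduction to $u_1=w_1=1$.} Choose $c=\bar u_1$ and $d=\bar w_1$. Both sides scale by the same factor $|u_1|^4|w_1|^4$, so it suffices to treat $u_1=w_1=1$. Write $u_2=u$ and $w_2=w$. The claim becomes
\[
\det(1,w,u,uw)=-|\Imm u\times\Imm w|^2,
\]
because $\Imm(\bar1 u)=\Imm u$ and $\Imm(w\bar 1)=\Imm w$.

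\textbf{The core computation.} Write $u=u_0+U$ and $w=w_0+W$ with $U,W\in\Imm\HH$. Then
\[
uw=u_0w_0+u_0W+w_0U+UW,
\]
and by~\eqref{eq:improd} $UW=-\langle U,W\rangle+U\times W$. Column operations remove every real multiple of $1$ and every multiple of the $w$- and $u$-columns from the others. This reduces the determinant to $\det(1,W,U,U\times W)$. Expanding along the first column, using the orientation $(1,i,j,k)$, gives the $3\times3$ determinant $\det(W,U,U\times W)$. This equals $-\det(U,W,U\times W)=-|U\times W|^2$, as required.

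\textbf{Main obstacle.} The only delicate points are bookkeeping of signs: the orientation convention in the final expansion, and the fact that $\det L_c=\det R_d=+|\cdot|^4$ rather than $\pm$. The latter was settled in~\eqref{eq:det}. Everything else is routine.
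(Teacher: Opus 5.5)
Your proof is correct and follows the paper's argument: normalize $u_1,w_1$ using $L$ and $R$ together with \eqref{eq:det}, then use column operations to reduce $\det(1,w,u,uw)$ to $\det(W,U,U\times W)=-|U\times W|^2$. One small slip: $c=\bar u_1$ and $d=\bar w_1$ send $u_1,w_1$ to $|u_1|^2,|w_1|^2$, not to $1$. Take $c=u_1^{-1}$ and $d=w_1^{-1}$ instead, with common scaling factor $|u_1|^{-4}|w_1|^{-4}$; your equivariance step already covers arbitrary $c,d$, so nothing else changes.
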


\begin{proof}
    Both sides are polynomial in the entries, so we may assume $u_1\neq0$ and $w_1\neq0$. 
    By~\eqref{eq:det}, applying $L_{u_1^{-1}}R_{w_1^{-1}}$ multiplies the determinant by $|u_1|^{-4}|w_1|^{-4}$, and it carries the four vectors to
\[
  1,\quad w,\quad u,\quad uw,
  \qquad u=u_1^{-1}u_2,\quad w=w_2w_1^{-1} .
\]
    Write $u=u_0+U$ and $w=w_0+W$ with $U,W\in\Imm\HH$. 
    Subtracting suitable multiples of the first column replaces $w$ by $W$ and $u$ by $U$; then, since, by~\eqref{eq:improd}, $uw=u_0w_0-\langle U,W\rangle+u_0W+w_0U+U\times W$, subtracting multiples of these two columns and of the first replaces $uw$ by $U\times W$. 
    As $1$ is orthogonal to $\Imm\HH$,
\[
  \det(1,w,u,uw)=\det{}_3\bigl(W,U,U\times W\bigr)
  =(W\times U)\cdot(U\times W)=-|U\times W|^2 .
\]
    Restoring the factor $|u_1|^4|w_1|^4$ and using $U=\Imm(\bar u_1u_2)/|u_1|^2$ and $W=\Imm(w_2\bar w_1)/|w_1|^2$ gives the claim.
\end{proof}

\subsection*{An explicit element of $P_{2,2}$}

For $\lambda\in(0,1/2)$ let $r_\lambda:\bS^1\to\Imm\HH$ be the loop
\[
  r_\lambda(s)=\bigl(1-2\lambda\sin3s\bigr)
  \bigl( i \cos s + j \sin s + k(\lambda+\sin3s)\bigr) .
\]
%The first two components have constant norm one, which is what will make non-parallelism immediate; t
Thanks to the presence of the positive factor $1-2\lambda\sin3s$, every component of $r_\lambda$ has zero mean over $[0,2\pi]$, so that there exists a loop $\gamma_\lambda:\bS^1\to\Imm\HH$ with $\gamma'_\lambda(s)=r_\lambda(s)$.
The reader can verify that
\[
  \gamma_\lambda(s)=i(\sin s+\tfrac{\lambda}{2}\cos2s+\tfrac{\lambda}{4}\cos4s) + j (
  -\cos s-\tfrac{\lambda}{2}\sin2s+\tfrac{\lambda}{4}\sin4s) + k (  -\tfrac{1-2\lambda^2}{3}\cos3s+\tfrac{\lambda}{6}\sin6s).
\]

Take now $\lambda=\tfrac14$ and $\nu=\tfrac18$, and lift these loops to $\HH$ by
\[
  c(s)=\overline{\sigma\bigl(4i+2\gamma_\lambda(s)\bigr)},
  \qquad
  d(t)=\sigma\bigl(4i+2\gamma_\nu(t)\bigr).
\]
Since $|\langle \gamma_\lambda,i\rangle|\le1+\tfrac{3\lambda}{4}<\tfrac32$, and similarly for $\gamma_\nu$, both arguments of $\sigma$ have $\xi_1\ge1$ and hence $\rho+\xi_1\ge2$; by Lemma~\ref{lem:theta}(3) the curves $c$ and $d$ are smooth and $2\pi$-periodic.
Moreover, $\Theta(\bar c(s))=4i+2\gamma_\lambda(s)$ and $\Theta(d(t))=4i+2\gamma_\nu(t)$, so that
\begin{equation}\label{eq:cd}
  \tfrac12\bigl(\Theta\circ\bar c\bigr)'(s)=r_\lambda(s),
  \qquad
  \tfrac12\bigl(\Theta\circ d\bigr)'(t)=r_\nu(t).
\end{equation}
%\begin{remark}
Passing to $\bar c$ on the left-hand factor is what matches $\Theta\circ\bar c$ to the slot $\Imm(\bar u_1u_2)$ of Lemma~\ref{lem:det}.
%\end{remark}

Now, for $(x,s),(t,y)\in\bT^2$, define
\[
  p(x,s)=A_xc(s)=e^{xi}c(s),\qquad q(t,y)=B_yd(t)=d(t)e^{yi},
\]
and
\[
  h_{2,2}\bigl((x,s),(t,y)\bigr)=p(x,s)\,q(t,y)\in\HH .
\]

\begin{proposition}
    The map $h_{2,2}:\bT^2\times\bT^2\to\HH$ has everywhere invertible mixed Hessian, namely $h_{2,2}\in P_{2,2}$.
\end{proposition}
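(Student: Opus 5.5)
The plan is to compute the mixed Hessian columnwise and reduce its determinant to Lemma~\ref{lem:det}. Since $h_{2,2}=p(x,s)\,q(t,y)$ is a product of a function of the first factor and a function of the second, the product rule gives the four mixed second derivatives as products of first derivatives:
\[
\partial_x\partial_t h=p_xq_t,\quad \partial_x\partial_y h=p_xq_y,\quad \partial_s\partial_t h=p_sq_t,\quad \partial_s\partial_y h=p_sq_y .
\]
This is exactly the configuration of Lemma~\ref{lem:det}, with
\[
u_1=p_x,\quad u_2=p_s,\quad w_1=q_t,\quad w_2=q_y .
\]
Hence
\[
\det H_{h_{2,2}}=-\bigl|\Imm(\bar p_xp_s)\times\Imm(q_y\bar q_t)\bigr|^2 .
\]
So it suffices to show that the two imaginary vectors $\Imm(\bar p_xp_s)$ and $\Imm(q_y\bar q_t)$ are never zero and never parallel.

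Next I compute these vectors using \eqref{eq:AB} and Lemma~\ref{lem:theta}(2). We have $p_x=e^{xi}ic$ and $p_s=e^{xi}c'$. Therefore
\[
\bar p_xp_s=\bar c\,(-i)\,e^{-xi}e^{xi}c'=-\bar c\,i\,c' .
\]
Set $v=\bar c$, so that $\overline{v'}=c'$. By Lemma~\ref{lem:theta}(2) and \eqref{eq:cd},
\[
\Imm(\bar p_xp_s)=-\tfrac12(\Theta\circ\bar c)'(s)=-r_\lambda(s).
\]
This is precisely why the conjugate $\bar c$ was used. Similarly $q_t=d'e^{yi}$ and $q_y=d\,e^{yi}i$, so
\[
q_y\bar q_t=d\,e^{yi}ie^{-yi}\,\overline{d'}=d\,i\,\overline{d'} ,
\]
and hence $\Imm(q_y\bar q_t)=r_\nu(t)$. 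The dependence on $x$ and $y$ drops out entirely, and
\[
\det H_{h_{2,2}}=-|r_\lambda(s)\times r_\nu(t)|^2 .
\]

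The remaining step, which I expect to be the only real content, is to show $r_\lambda(s)\times r_\nu(t)\neq0$ for all $s,t$. The positive scalar factors $1-2\lambda\sin3s$ and $1-2\nu\sin 3t$ are irrelevant, so it suffices to treat the vectors $(\cos s,\sin s,\lambda+\sin3s)$ and $(\cos t,\sin t,\nu+\sin3t)$. Neither vector vanishes. If they were parallel, their first two coordinates are unit vectors in the plane, so the proportionality constant would be $\pm1$.

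In the case $+1$ we get $t\equiv s$. The third coordinates then force $\lambda+\sin3s=\nu+\sin3s$, i.e.\ $\lambda=\nu$, which contradicts $\tfrac14\neq\tfrac18$.

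In the case $-1$ we get $t\equiv s+\pi$, so $\sin3t=-\sin3s$. The third coordinates then give
\[
\lambda+\sin3s=-(\nu-\sin3s),
\]
i.e.\ $\lambda=-\nu$, which contradicts $\lambda,\nu>0$.

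Hence the determinant is strictly negative everywhere, and $h_{2,2}\in P_{2,2}$.
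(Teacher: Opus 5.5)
Your proof is correct and follows the paper's argument: you use Lemma~\ref{lem:det} and Lemma~\ref{lem:theta}(2) to reduce to $\det H_{h_{2,2}}=-|r_\lambda(s)\times r_\nu(t)|^2$, and then rule out parallelism using the unit-norm planar components. The only cosmetic difference is that you apply Lemma~\ref{lem:det} directly to $p_x,p_s,q_t,q_y$ and let the factors $e^{\pm xi}$, $e^{\pm yi}$ cancel inside $\bar p_xp_s$ and $q_y\bar q_t$, whereas the paper first removes these factors as elements of $SO_4$.
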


\begin{proof}
    Since $p$ depends only on $(x,s)$ and $q$ only on $(t,y)$, the four mixed derivatives are, by~\eqref{eq:AB},
\[
  \partial_x\partial_t h_{2,2}=e^{xi}(ic)\,d'\,e^{yi},\qquad
  \partial_x\partial_y h_{2,2}=e^{xi}(ic)(di)\,e^{yi},
\]
\[
  \partial_s\partial_t h_{2,2}=e^{xi}c'\,d'\,e^{yi},\qquad
  \partial_s\partial_y h_{2,2}=e^{xi}c'(di)\,e^{yi}.
\]
    Left multiplication by $e^{xi}$ and right multiplication by $e^{yi}$ lie in $SO_4$, so both factors may be removed. Lemma~\ref{lem:det} applies with
\[
  u_1=ic,\qquad u_2=c',\qquad w_1=d',\qquad w_2=di ,
\]
    and Lemma~\ref{lem:theta}(2) together with~\eqref{eq:cd} evaluates the two imaginary parts,
\[
  \Imm\bigl(\overline{ic}\,c'\bigr)=-\Imm\bigl(\bar c\,i\,c'\bigr)
  =-\tfrac12\bigl(\Theta\circ\bar c\bigr)'=-r_\lambda(s),
  \qquad
  \Imm\bigl(di\,\overline{d'}\bigr)
  =\tfrac12\bigl(\Theta\circ d\bigr)'=r_\nu(t).
\]
    Hence
\[
  \det H_{h_{2,2}}=-\bigl|r_\lambda(s)\times r_\nu(t)\bigr|^2 ,
\]
    so it is enough to show that $r_\lambda(s)$ and $r_\nu(t)$ are never parallel. 
    The positive factors $1-2\lambda\sin3s$ and $1-2\nu\sin3t$ are irrelevant for parallelism, so we get rid of it.
    Suppose now that
\[
  (\cos s,\sin s,\lambda+\sin3s)=\kappa(\cos t,\sin t,\nu+\sin3t).
\]
    The first two coordinates have Euclidean norm one, hence $\kappa=\pm1$. 
    If $\kappa=1$ then $t\equiv s\pmod{2\pi}$ and the third coordinate gives $\lambda=\nu$, a contradiction. 
    If $\kappa=-1$ then $t\equiv s+\pi \pmod{2\pi}$ and the third coordinate gives $\nu=-\lambda$, again a contradiction.
    Therefore, the two vectors are never parallel and so $\det H_{h_{2,2}}<0$ at all points.
\end{proof}

\subsection*{An explicit element of $P_{3,4}$}

We use coordinates $(y_1,y_2,y_3,t)$ on $\bT^4$. 
On $\HH^3\simeq\R^{12}$ we let any $T\in\operatorname{End}(\HH)$ act blockwise,
\[
  \widehat T\,(a_1,a_2,a_3)=(Ta_1,Ta_2,Ta_3),
\]
and we note that $\det\widehat T=(\det T)^3$; in particular $\widehat{L_a}\in SO_{12}$ for $a\in\HH$ with $|a|=1$.

% What the construction requires of the next two displays is only this: a
% loop $z\mapsto\bigl(g_1'(z),g_2'(z),g_3'(z)\bigr)$ of positively oriented
% orthonormal frames of $\Imm\HH$, whose entries all have zero average, so
% that the primitives $g_r$ close up. The following explicit choice does the job. 
Every loop $\zeta:\bS^1\to SO_3$ can be seen as a loop $(\zeta_1(t),\zeta_2(t),\zeta_3(t))$ of positively oriented orthonormal frames of $\Imm\HH$.
When each of the entries of the $\zeta_r$ has zero average, then there is a loop $Z:\bS^1\to M_3(\bR)$ such that $Z'(t)=\zeta(t)$ for every $t$.
The matrix
\[
  G(t)=
  \begin{pmatrix}
    \cos t & -\sin t\cos 2t & \sin t\sin 2t\\
    \sin t & \cos t\cos 2t & -\cos t\sin 2t\\
    0 & \sin 2t & \cos 2t
  \end{pmatrix},
\]
product of a rotation about $k$ through $t$ with a rotation about $i$ through $2t$, satisfies these conditions and is the derivative of
%so that $S(t)\in SO(3)$ and all its entries have teroaverage, and let $g_1,g_2,g_3:\bS^1\to\Imm\HH$ be the rows of
\[
  \Gamma(t)=
  \begin{pmatrix}
    \sin t & \tfrac16\cos3t-\tfrac12\cos t & \tfrac12\sin t-\tfrac16\sin3t\\
    -\cos t & \tfrac12\sin t+\tfrac16\sin3t & \tfrac12\cos t+\tfrac16\cos3t\\
    0 & -\tfrac12\cos2t & \tfrac12\sin2t
  \end{pmatrix}.
\]
Denote by $\Gamma_r$ the rows of $\Gamma$ and set $F_r(t)=3i+2\Gamma_r(t)$ and $v_r(t)=\sigma\bigl(F_r(t)\bigr)$, $r=1,2,3$.
Since $\langle \Gamma_3,i\rangle=0$ and $|\langle \Gamma_1,i\rangle|, |\langle \Gamma_2,i\rangle|\le1$, every $F_r$ satisfies $\langle F_r,i\rangle\ge1$,
so the $v_r$ are smooth periodic curves and, by Lemma~\ref{lem:theta}(3),
\begin{equation}\label{eq:vnorm}
  |v_r(t)|^2=|F_r(t)|\ge1,\qquad |v_3(t)|^2\ge3 .
\end{equation}
Moreover $\Theta(v_r)=F_r=3i+2\Gamma_r$ and $\Gamma'=G$, so Lemma~\ref{lem:theta}(2) gives
\begin{equation}\label{eq:SS}
  \bigl(\langle L_{e_\ell}v_r',R_iv_r\rangle\bigr)_{r,\ell}
  =\Bigl(\bigl\langle\tfrac12\tfrac{d}{dt}\Theta(v_r),e_\ell\bigr\rangle\Bigr)_{r,\ell}
  =\bigl(\langle \Gamma_r'(t),e_\ell\rangle\bigr)_{r,\ell}=G(t).
\end{equation}
Define $q:\bT^4\to\HH^3\simeq\R^{12}$ by
\[
  q(y_1,y_2,y_3,t)
  %)=\bigl(v_1(t)e^{y_1i},v_2(t)e^{y_2i},\,v_3(t)e^{y_3i}\bigr)
  =\bigl(B_{y_r}v_r(t)\bigr)_{r=1,2,3} .
\]

\begin{lemma}\label{lem:twelve}
    Denote $t$ by $y_4$. 
    The twelve vectors $\widehat{L_{e_\ell}}\,q_{y_\alpha}$, $1\le\ell\le3$, $1\le\alpha\le4$ are linearly independent at every point of $\bT^4$. 
    More precisely, up to a fixed sign,
\[
    \det\bigl(\widehat{L_{e_\ell}}\,q_{y_\alpha}\bigr)_{\ell,\alpha}
  =\prod_{r=1}^{3}|v_r(t)|^{2}=\prod_{r=1}^{3}\bigl|F_r(t)\bigr|\;\ge\;3 .
\]
\end{lemma}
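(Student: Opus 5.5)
The plan is to compute the twelve vectors explicitly, strip off the $y$-dependence with orthogonal maps of determinant one, and then reduce to a block computation in which the matrix $G(t)$ of \eqref{eq:SS} appears as a $3\times3$ Schur complement.

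\textbf{Step 1: the derivatives.} Since $q=(v_r(t)e^{y_ri})_r$ and $e^{y_ri}$ commutes with $i$, the derivative $q_{y_r}$ for $r\le3$ is supported in the $r$-th block, where it equals $v_r\,i\,e^{y_ri}=B_{y_r}R_iv_r$. The remaining derivative is $q_t=(B_{y_r}v_r')_r$. By \eqref{eq:assoc} each $L_{e_\ell}$ commutes with each $B_{y_r}$. Hence the operator $\operatorname{diag}(B_{y_1},B_{y_2},B_{y_3})$ can be factored out; by \eqref{eq:det} it has determinant $1$. So we may assume $y_1=y_2=y_3=0$.

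After this reduction the columns are of two kinds.
\begin{itemize}
\item For $\alpha=r\le3$: the vector $L_{e_\ell}R_iv_r$ placed in block $r$, with zeros in the other blocks.
\item For $\alpha=4$: the vector $(L_{e_\ell}v_r')_{r=1,2,3}$.
\end{itemize}

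\textbf{Step 2: a basis in each block.} Fix $r$ and write $w=R_iv_r$, so $|w|=|v_r|\neq0$ by \eqref{eq:vnorm}. The four vectors $w,\,e_1w,\,e_2w,\,e_3w$ are mutually orthogonal and each has length $|v_r|$. This holds because $L_{e_\ell}$ is skew-adjoint and orthogonal, and $L_{e_\ell}L_{e_m}$ is skew-adjoint for $\ell\neq m$.

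So in block $r$ the three columns $(\ell,r)$ span exactly $w^\perp$. Their $3\times3$ Gram-type determinant against an orthonormal basis of $w^\perp$ is $\pm|v_r|^3$.

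\textbf{Step 3: column operations.} Subtract from each column $(\ell,4)$ suitable combinations of the nine columns $(\ell',r)$. This kills its component along $w^\perp$ in every block and leaves only its component along $\hat w_r=R_iv_r/|v_r|$. That component is
\[
\langle L_{e_\ell}v_r',R_iv_r\rangle/|v_r|=G_{r\ell}(t)/|v_r|
\]
by \eqref{eq:SS}.

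Now choose the orthonormal basis of $\HH^3$ consisting, in each block, of $\hat w_r$ together with an orthonormal basis of $w^\perp$. In this basis the matrix becomes block triangular after a permutation of rows and columns. Its determinant is therefore, up to a sign,
\[
\prod_r|v_r|^3\cdot\det\bigl(G_{r\ell}/|v_r|\bigr)=\det G(t)\prod_r|v_r|^2=\prod_r|v_r|^2 ,
\]
since $G(t)\in SO_3$.

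\textbf{Step 4: the bound.} By Lemma~\ref{lem:theta}(3), $|v_r|^2=|F_r|$. Then \eqref{eq:vnorm} gives $\prod_r|F_r|\ge1\cdot1\cdot3=3$.

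\textbf{Main obstacle: the sign.} The delicate point is showing the sign is fixed, i.e.\ independent of the point and of $t$. The change to the adapted basis depends on $t$, so its orientation must be controlled. I would handle this by continuity. The determinant is a continuous function of $t$ that never vanishes, and its absolute value equals $\prod|v_r|^2$. Since $\bT^4$ is connected, the sign is therefore constant.
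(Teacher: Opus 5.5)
Your proposal is correct and follows the paper's proof essentially step by step. You remove the factor $\operatorname{diag}(B_{y_1},B_{y_2},B_{y_3})$, use that $L_{e_1}R_iv_r,L_{e_2}R_iv_r,L_{e_3}R_iv_r$ form an orthogonal frame of $(R_iv_r)^\perp$ of common norm $|v_r|$, and reduce to the $3\times3$ matrix $\bigl(G_{r\ell}(t)/|v_r|\bigr)$ via \eqref{eq:SS}. Your continuity argument for the constancy of the sign is valid and makes explicit what the paper leaves implicit; the sign can also be seen directly, since $(\hat w_r,e_1\hat w_r,e_2\hat w_r,e_3\hat w_r)$ is the image of $(1,i,j,k)$ under $R_{\hat w_r}\in SO_4$.
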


\begin{proof}
    By~\eqref{eq:AB}, the derivatives of $q$ are
\[
  q_{y_r}=\bigl(\delta_{r1}\, B_{y_1}R_iv_1,\,\delta_{r2}\, B_{y_2}R_iv_2,\,\delta_{r3}\, B_{y_3}R_iv_3\bigr),\quad r=1,2,3,
  \quad
  q_{y_4}=\bigl(B_{y_r}v_r'\bigr)_{r} .
\]
    Right multiplication by $e^{y_ri}$ in the $r$-th block is an element of $SO_4$ commuting with every $L_{e_\ell}$, and the three of them together form an element of $SO_{12}$; hence they may all be removed, leaving the nine block-supported vectors $L_{e_\ell}R_iv_r$ (one block each) and the three full vectors $\bigl(L_{e_\ell}v_1',L_{e_\ell}v_2',L_{e_\ell}v_3'\bigr)$.

    Now, fix $r$. 
    By~\eqref{eq:norm} the four vectors $e_0w,e_1w,e_2w,e_3w$ are orthogonal of common norm $|w|$ for every $w\in\HH$; with $w=R_iv_r$ this says that $L_{e_1}R_iv_r,L_{e_2}R_iv_r,L_{e_3}R_iv_r$ are orthogonal of common norm $|v_r|$ and span $(R_iv_r)^\perp$. 
    Expanding the determinant along the nine block-supported columns therefore contributes the factor $\prod_{r}|v_r|^{3}$ and leaves the $3\times3$ determinant of the components of $L_{e_\ell}v_r'$ along the unit vectors $R_iv_r/|v_r|$, namely of $\langle L_{e_\ell}v_r',R_iv_r\rangle/|v_r|$. 
    By~\eqref{eq:SS} that determinant equals $\det G(t)\prod_r|v_r|^{-1}=\prod_r|v_r|^{-1}$. 
    Hence, up to a fixed sign,
\[
  \det\bigl(\widehat{L_{e_\ell}}\,q_{y_\alpha}\bigr)_{\ell,\alpha}
  =\prod_{r=1}^{3}|v_r|^{3}\cdot\prod_{r=1}^{3}|v_r|^{-1}
  =\prod_{r=1}^{3}|v_r|^{2},
\]
    and~\eqref{eq:vnorm} bounds this below by $3$.
\end{proof}

We consider the following immersion $\varphi:\bT^3\to\HH$:
\[
  \varphi(x_1,x_2,x_3)=(2+\cos x_3)\,e^{x_1i}+(2+\sin x_3)\,e^{x_2i}j .
\]
Its partial derivatives are
\[
  \varphi_{x_1}=(2+\cos x_3)\,ie^{x_1i},\qquad
  \varphi_{x_2}=(2+\sin x_3)\,e^{x_2i}k,\qquad
  \varphi_{x_3}=-\sin x_3\,e^{x_1i}+\cos x_3\,e^{x_2i}j .
\]
The first lies in $\spn\{1,i\}$ and the second in $\spn\{j,k\}$, so they are
orthogonal and both nonzero; the third is orthogonal to each of them and has
norm one. Thus $\varphi$ is an immersion. Finally define
\[
  h_{3,4}(x,y)=\bigl(\varphi(x)\,q^{(1)}(y),\;\varphi(x)\,q^{(2)}(y),\;
  \varphi(x)\,q^{(3)}(y)\bigr)=\widehat{L_{\varphi(x)}}\,q(y) .
\]

\begin{proposition}
    The map $h_{3,4}:\bT^3\times\bT^4\to\HH^3\simeq\R^{12}$ has everywhere invertible mixed Hessian, namely $h_{3,4}\in P_{3,4}$.
\end{proposition}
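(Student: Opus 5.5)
The plan is to reduce the mixed Hessian of $h_{3,4}$ to the twelve vectors of Lemma~\ref{lem:twelve} by a change of basis that is constant in $y$. Since $h_{3,4}(x,y)=\widehat{L_{\varphi(x)}}\,q(y)$ is bilinear in $(\varphi(x),q(y))$ and $\varphi$ depends only on $x$ while $q$ depends only on $y$, the mixed derivatives are
\[
  \partial_{x_i}\partial_{y_\alpha}h_{3,4}=\widehat{L_{\varphi_{x_i}(x)}}\,q_{y_\alpha}(y),\qquad 1\le i\le3,\ 1\le\alpha\le4 .
\]
So I must show that these twelve vectors in $\R^{12}$ are linearly independent at every $(x,y)$.

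First I factor the quaternions $\varphi_{x_i}$ through imaginary ones. Fix $x$. As computed above, $\varphi_{x_1},\varphi_{x_2},\varphi_{x_3}$ are mutually orthogonal and nonzero, so they span a $3$-dimensional subspace of $\HH$. Let $n=n(x)$ be a unit quaternion orthogonal to that subspace. For $v$ in the span,
\[
  \langle \bar n v,1\rangle=\Ree(\bar n v)=\langle v,n\rangle=0,
\]
so $w_i:=\bar n\,\varphi_{x_i}\in\Imm\HH$ and $\varphi_{x_i}=n\,w_i$. By~\eqref{eq:norm} the $w_i$ are still orthogonal, with norms $|\varphi_{x_i}|$. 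Writing $w_i=\sum_\ell M_{i\ell}e_\ell$ gives a $3\times3$ real matrix $M(x)$ with $|\det M|=|\varphi_{x_1}||\varphi_{x_2}||\varphi_{x_3}|=(2+\cos x_3)(2+\sin x_3)\ge1$.

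Next I use~\eqref{eq:assoc} to write $L_{\varphi_{x_i}}=L_nL_{w_i}=\sum_\ell M_{i\ell}\,L_nL_{e_\ell}$. This gives
\[
  \partial_{x_i}\partial_{y_\alpha}h_{3,4}=\widehat{L_n}\sum_{\ell}M_{i\ell}\,\widehat{L_{e_\ell}}\,q_{y_\alpha}.
\]
Since $\widehat{L_n}\in SO_{12}$, it can be dropped. The remaining matrix is obtained from $\bigl(\widehat{L_{e_\ell}}q_{y_\alpha}\bigr)_{\ell,\alpha}$ by applying $M$ on the index $\ell$ for each fixed $\alpha$, that is, by the block-diagonal matrix $M\otimes\mathrm{id}_4$. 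Hence
\[
  \det H_{h_{3,4}}=\pm(\det M)^4\det\bigl(\widehat{L_{e_\ell}}\,q_{y_\alpha}\bigr)_{\ell,\alpha},
\]
and Lemma~\ref{lem:twelve} bounds the last factor below by $3$ in absolute value, so $|\det H_{h_{3,4}}|\ge3$ everywhere.

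The only delicate point is the factorization step. One needs that the span of the $\varphi_{x_i}$ has a unit normal $n$ that makes every $\bar n\varphi_{x_i}$ imaginary, so that exactly the left multiplications $L_{e_1},L_{e_2},L_{e_3}$ of Lemma~\ref{lem:twelve} appear; this is what the orthogonality computation for $\varphi$ provides. The choice of $n$ need only be pointwise (it is defined up to sign), so no global continuity of $n$ is required, since invertibility is a pointwise condition.
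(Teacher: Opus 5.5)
Your proposal is correct and follows essentially the same route as the paper. Writing $\varphi_{x_i}=n\,(\bar n\,\varphi_{x_i})$ with $\bar n\,\varphi_{x_i}\in\Imm\HH$ is the paper's observation that $L_n$ carries $\Imm\HH$ onto the tangent $3$-plane, your $M$ is the paper's $C$, and you conclude via $\det H_{h_{3,4}}=\pm(\det M)^4\det\bigl(\widehat{L_{e_\ell}}q_{y_\alpha}\bigr)$ and Lemma~\ref{lem:twelve} exactly as the paper does. The only addition is that you compute $|\det M|=(2+\cos x_3)(2+\sin x_3)\ge1$ from the orthogonality of the $\varphi_{x_i}$, which gives the explicit bound $|\det H_{h_{3,4}}|\ge3$ instead of mere nonvanishing.
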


\begin{proof}
    Fix $x\in\bT^3$ and let
\[
  \Pi=T_x\varphi\bigl(T_x\bT^3\bigr)
  =\spn\{\varphi_{x_1},\varphi_{x_2},\varphi_{x_3}\}\subset\HH ,
\]
    a $3$-dimensional subspace since $\varphi$ is an immersion. 
    Choose a unit normal $n$ to $\Pi$. Since $L_n$ is orthogonal and $L_n1=n$, it carries $1^\perp=\Imm\HH$ onto $n^\perp=\Pi$, so the three vectors $L_ne_1,L_ne_2,L_ne_3$ form a basis of $\Pi$ and there is an invertible $3\times3$ matrix $C=(C_{h\ell})$ with
\[
  \varphi_{x_h}=\sum_{\ell=1}^{3}C_{h\ell}\,L_ne_\ell=\sum_{\ell=1}^{3}C_{h\ell}\,ne_\ell .
\]
    By associativity,
\[
  \varphi_{x_h}\,q_{y_\alpha}
  =\sum_{\ell=1}^{3}C_{h\ell}\,(ne_\ell)\,q_{y_\alpha}
  =\widehat{L_n}\sum_{\ell=1}^{3}C_{h\ell}\,\widehat{L_{e_\ell}}\,q_{y_\alpha},
\]
    blockwise in each of the three components. 
    Hence the mixed Hessian of $h_{3,4}$ is obtained from the matrix of the twelve vectors $\widehat{L_{e_\ell}}q_{y_\alpha}$ by the invertible changes of basis $\widehat{L_n}$ in the target and $C$ in the $x$-variables, so that
\[
  \det H_{h_{3,4}}
  =\det\widehat{L_n}\cdot(\det C)^4\cdot
  \det\bigl(\widehat{L_{e_\ell}}q_{y_\alpha}\bigr)_{\ell,\alpha}\neq0
\]
 by Lemma~\ref{lem:twelve}.
\end{proof}

\bibliographystyle{unsrt}
\bibliography{refs.bib} 

% \begin{thebibliography}{9}
% \bibitem{DeLeo} R.~De Leo, \emph{Free maps in critical dimension on low-dimensional tori and closed
% surfaces}, preprint, 2026.
% \bibitem{EM} Y.~Eliashberg and N.~Mishachev, \emph{Introduction to the $h$-Principle},
% Graduate Studies in Mathematics 48, AMS, 2002.
% \bibitem{EG} Ya.~M.~Eliashberg and M.~Gromov, Removal of singularities of smooth mappings,
% \emph{Math. USSR Izvestija} 5 (1971), 615--639.
% \bibitem{Gromov} M.~Gromov, \emph{Partial Differential Relations}, Springer, 1986.
% \end{thebibliography}

\end{document}